\documentclass[11pt]{amsart}
\usepackage{graphicx}
\usepackage{amsmath, amssymb}
\usepackage{verbatim}
\usepackage{color}
\vfuzz2pt 
\hfuzz2pt 
\newtheorem{thm}{Theorem}[section]

\newtheorem{conj.}[thm]{Conjecture}
\newtheorem{lem}[thm]{Lemma}
\newtheorem{prop}[thm]{Proposition}
\theoremstyle{definition}
\newtheorem{defn}[thm]{Definition}
\theoremstyle{remark}

\numberwithin{equation}{section}


\newcommand{\h}{\mathcal{H}}

\begin{document}

\title[On Controlled Frames in Hilbert $C^*$-modules]
{On Controlled Frames  in Hilbert $C^*$-modules }%
\author[M. Rashidi-Kouchi, A. Rahimi]{M. Rashidi-Kouchi, A. Rahimi}

\address{ Department of Mathematics, Kahnooj Branch, Islamic Azad University, Kerman, Iran.}
\email{m$_{-}$rashidi@kahnoojiau.ac.ir}
\address{ Department of Mathematics, University of Maragheh, P. O. Box 55181-83111,
Maragheh, Iran.}

\email{rahimi@maragheh.ac.ir}

\subjclass[2010]{Primary 42C15; Secondary 46L08, 42C40, 47A05.}
\keywords{Frame, Controlled frame, Weighted frame, Hilbert
$C^*$-module, Multiplier operator.}

\begin{abstract}
In this paper, we introduce  controlled frames in Hilbert
$C^*$-modules and we show that they share many useful properties
with their corresponding notions in Hilbert space. Next, we give a
characterization of controlled frames in Hilbert $C^*$-module. Also
multiplier operators for controlled frames in Hilbert $C^*$-modules
will be defined and some of its properties will be shown. Finally,
we investigate weighted frames in Hilbert $C^*$-modules and verify
their relations to controlled frames and multiplier operators.

\end{abstract}
\maketitle
\section{Introduction}

Frames for Hilbert spaces were first introduced in 1952 by Duffin
and Schaeffer \cite{DS} for study of nonharmonic Fourier series.
They were reintroduced and development in 1986 by Daubechies,
Grossmann and Meyer\cite{DGM}, and popularized from then on.  For
basic results on frames, see \cite{C}.

Hilbert $C^*$-modules form a wide category between Hilbert spaces
and Banach spaces. Their structure was first used by Kaplansky
\cite{kap} in 1952. They are an often used tool in operator theory
and in operator algebra theory. They serve as a major class of
examples in operator $C^*$-module theory.

The notions of frames in Hilbert $C^*$-modules were introduced and
investigated in \cite{FL1}. Frank and Larson \cite{FL1,FL} defined
the standard frames in Hilbert $C^*$-modules in 1998 and got a
series of result for standard frames in finitely or countably
generated Hilbert $C^*$-modules over unital $C^*$-algebras.
Extending the results to this more general framework is not a
routine generalization, as there are essential differences between
Hilbert $C^*$-modules and Hilbert spaces. For example, any closed
subspace in a Hilbert space has an orthogonal complement, but this
fails in Hilbert $C^*$-module. Also there is no explicit analogue of
the Riesz representation theorem of continuous functionals in
Hilbert $C^*$-modules. We refer the readers to \cite{L} and
\cite{kasp} for more details on Hilbert $C^*$-modules and to
\cite{FL, RN1, RNM, RN} for a discussion of basic properties of
frame in Hilbert $C^*$-modules and their generalizations.

Controlled frames have been introduced  to improve the numerical
efficiency of iterative algorithms for inverting the frame operator
on abstract Hilbert spaces \cite{BAG}, however they are used earlier
in \cite{BVA} for spherical wavelets.

In this paper, we introduce  controlled frames in Hilbert
$C^*$-modules and we show that they share many useful properties
with their corresponding notions in Hilbert space. Next, we give a
characterization of controlled frames in Hilbert $C^*$-module. Also
multiplier operators for controlled frames in Hilbert $C^*$-modules
will be defined and some of its properties will be shown. Finally,
we investigate weighted frames in Hilbert $C^*$-modules and verify
their relation to controlled frames and multiplier operators.

 The paper is organized as follows. In section 2, we review the concept  Hilbert
$C^*$-modules, frames and multiplier operators in Hilbert
$C^*$-modules.  Also the analysis, synthesis, frame operator and
dual frames be reviewed. In section 3, we introduce  controlled
frames   in Hilbert $C^*$-modules and characterize them. In section
4, we investigate weighted frames in Hilbert $C^*$-modules and
verify their relation to controlled frames and multiplier operators.

\section{Preliminaries}

In this section, we collect the basic notations and some preliminary
results.  We denote by $I$ the identity operator on $\h$. Let
$B(\h_1,\h_2)$ be the set of all bounded linear operators from
$\h_1$ to $\h_2$. This set is a Banach space for the operator norm
$\|A\|=sup_{\|x\|_{\h_1}\leq 1 }\|Ax\|_{\h_2}$. The adjoint of the
operator $A$ is denoted by $A^*$ and the spectrum of $A$ by
$\sigma(A)$. We define $GL(\h_1,\h_2)$ as the set of all bounded
linear operators with a bounded inverse, and similarly for $GL(\h)$.
Our standard reference for Hilbert space and operator theory is
\cite{Conway}.

\par Controlled frames  introduced in \cite{BAG} as follows.
 Let $C \in GL(\h)$. A frame controlled by the operator $C$ or
$C$-controlled frame is a family of vectors $\Psi = \{\psi_j \in \h
: j \in J\}$, such that there exist two constants $m > 0$ and $M <
\infty$ satisfying \[ m\|f\|^2 \leq \sum_j \langle f, \psi_j \rangle
\langle C\psi_j , f\rangle \leq M\|f\|^2,\] for all $f \in \h$.

Also weighted frames are defined as follows. Let $\Psi = \{\psi_j
\in \h : j \in J\}$ be a sequence of elements in $\h$ and
$\{\omega_j : j\in J\} \subseteq \mathbb{R}^+ $a sequence of
positive weights. This pair is called a weighted frame of $\h$ if
there exist constants $m
> 0$ and $M < \infty$ such that
\[ m\|f\|^2 \leq \sum_j \omega_j |\langle f,
\psi_j \rangle |^2 \leq M\|f\|^2,\] for all $f \in \h$.

Hilbert $C^*$-modules form a wide category between Hilbert spaces
and Banach spaces. Hilbert $C^*$-modules are generalizations of
Hilbert spaces by allowing the inner product to take values in a
$C^*$-algebra rather than in the field of complex numbers.

Let $A$ be a $C^*$-algebra with involution $*$. An inner product
$A$-module (or pre Hilbert $A$-module) is a complex linear space
$\h$ which is a left $A$-module with an inner product map $ \langle.
,.\rangle:\h\times{\h} \rightarrow A $ which satisfies the following
properties:
\begin{enumerate}
\item $\langle \alpha{f}+\beta{g},h\rangle= \alpha\langle
f,h\rangle+\beta\langle g,h\rangle$ for all $f,g,h \in \h$ and
$\alpha, \beta \in \mathbb C$;
\item $\langle af,g\rangle =a\langle f,g\rangle$ for  all $f,g \in \h$ and $a\in A$;
\item $\langle f,g\rangle = \langle g,f\rangle^{*} $ for all $ f,g \in \h $;
\item $\langle f,f\rangle \geq 0$ for all $f\in \h$ and $\langle f,f\rangle=0$ iff $f=0$.
\end{enumerate}
For $f\in \h$, we define a norm on $\h$ by
$\|f\|_{\h}=\|\langle{f},{f}\rangle\|^{{1}/{2}}_{A}$. If $\h$ is
complete with this norm, it is called a (left) Hilbert $C^*$-module
over $A$ or a (left) Hilbert $A$-module.

An element $a$ of a $C^*$-algebra $A$ is positive if $a^* = a$ and
its spectrum  is a subset of positive real numbers. In this case, we
write $a\geq 0$. By condition (4) in the definition
$\langle{f},{f}\rangle \geq 0$ for every $f \in \h$, hence we define
$|f|=\langle{f},{f}\rangle ^{1/2}$. We call $Z(A)=\{a\in A: ab=ba, \forall b\in A\}$, the center of $A$. If $a\in Z(A)$, then $a^* \in Z(A)$, and if $a$ is an invertible element of $Z(A)$, then $a^{-1}\in Z(A)$, also if $a$ is a positive element of $Z(A)$, then $a^{\frac{1}{2}}\in Z(A)$. Let $Hom_A(M,N)$ denotes the set
of all $A$-linear operators from $M$ to $N$.

 Let
\[\ell^{2}(A)= \left\{ \{a_{j}\}\subseteq A:\sum_{j\in J}a_{j}^{*}a_{j}\,\, converges\,\,\, in \|. \|
\right\}\] with inner product
\[
\langle\{a_j\},\{b_j\}\rangle=\sum_{j\in J} a^*_j b_j,\quad
\{a_{j}\}, \{b_{j}\}\in\ell^{2}(A)
\]
and
\[
\|\{a_j\}\|:=\sqrt{\|\sum a^*_ja_j\|},
\] it was shown that \cite{O}, $\ell^{2}(A)$ is Hilbert $A$-module.
 \par Note that in Hilbert $C^*$-modules the
Cauchy-Schwartz inequality is valid.

Let $f,g\in\h$, where $\h$ is a Hilbert $C^*$-module , then
\[
\|\langle f,g\rangle\|^2\leq \|\langle f,f\rangle\|\times\|\langle
g,g\rangle\|.
\]

We are focusing in finitely and countably generated Hilbert $C^*$-
modules over unital $C^*$-algebra $A$. A Hilbert $A$-module $\h$ is
finitely generated if there exists a finite set $\{ x_1, x_2,
...,x_n\}\subseteq\h$ such that every $x\in\h$ can be expressed as
$x=\sum_{i=1}^{n}a_ix_i$, $a_i\in A$. A Hilbert $A$-module $\h$ is
countably generated if there exits a countable set of generators.

The notion of (standard) frames in Hilbert $C^*$-modules is first
defined by Frank and Larson \cite{FL}. Basic properties of frames in
Hilbert $C^*$-modules are discussed in  \cite{HJLM, HJM, KK1, KK2}.

Let $\h$ be a Hilbert $C^*$-module, and $J$ a set which is finite or
countable. A system $\{f_{j}: j\in J\}\subseteq \h$ is called a
frame for $\h$ if there exist constants $C, D > 0$ such that \\
\begin{equation}\label{101}
 C\langle{f},{f}\rangle\leq \sum_{j \in J}\langle{f},{f_{j}}\rangle\langle{f_{j}},{f}\rangle\leq D\langle{f},{f}\rangle
\end{equation}
 for all $f \in \h$. The constants $C$ and $D$ are called the frame
 bounds. If $C=D$ it called a tight frame and in the case $C=D=1$, it called Parseval frame. It is called a Bessel sequence if the second inequality in (\ref{101}) holds.

Unlike Banach spaces, it is known \cite{FL} that every finitely
generated or countably generated Hilbert $C^*$- modules admits a
frame.

The following characterization of frames in Hilbert $C^*$- modules,
which was obtained independently in \cite{A} and \cite{jing},
enables us to verify whether a sequence is a frames in Hilbert
$C^*$- modules in terms of norms. It also allows us to characterize
frames in Hilbert $C^*$- modules from the operator theory point of
view.
\begin{thm}\label{equivalent frame}
Let $\h$ be a finitely or countably generated Hilbert $A$-module
over a unital $C^*$-algebra $A$ and $\{f_{j}: j\in J\}\subseteq \h$
a sequence. Then $\{f_{j}: j\in J\}$ is a frame for $\h$  if and
only if there exist constants $C, D > 0$ such that
\[C\|f\|^{2}\leq \left\|\sum_{j\in J}\langle f, f_{j}\rangle  \langle f_{j}, f\rangle \right\|\leq D\|f\|^{2} ,\;\;\; f\in\h.\]
\end{thm}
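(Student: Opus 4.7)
The plan is to handle the two implications separately. The forward direction is essentially a one-line application of monotonicity of the $C^*$-norm, while the converse requires constructing a frame operator and leveraging the finitely/countably generated hypothesis to recover an operator-valued inequality from the scalar norm inequality.

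For the forward direction ($\Rightarrow$), I would start from the operator-valued frame inequality
\[ C\langle f, f\rangle \leq \sum_{j\in J} \langle f, f_j\rangle \langle f_j, f\rangle \leq D\langle f, f\rangle, \]
all three terms being positive elements of $A$. Applying the $C^*$-norm, which is monotone on the positive cone (so that $0\leq a\leq b$ forces $\|a\|\leq\|b\|$), and then using the defining identity $\|\langle f, f\rangle\| = \|f\|^2$ of the Hilbert-module norm, yields the desired norm inequality at once.

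The converse ($\Leftarrow$) is the substantive direction. The idea is to build an analysis-type operator $T:\h\to\ell^{2}(A)$ sending $f$ to $\{\langle f_j, f\rangle\}_j$ (with the convention chosen so that $\|Tf\|^{2}_{\ell^{2}(A)} = \|\sum_j \langle f, f_j\rangle\langle f_j, f\rangle\|$). The upper norm bound guarantees $Tf\in\ell^2(A)$ with $\|T\|^{2}\leq D$, and the finite/countable-generation assumption lets one show $T$ is adjointable. The frame operator $S:=T^{*}T$ is then a positive, bounded, adjointable operator with $\|S\|\leq D$. The crucial step is to promote the norm lower bound $\|Tf\|^{2}\geq C\|f\|^{2}$ to genuine invertibility of $S$ with $\|S^{-1}\|\leq 1/C$. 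Once this is in hand, functional calculus in the unital $C^*$-algebra of adjointable operators on $\h$ gives the operator inequalities
\[ \|S^{-1}\|^{-1}\, I \;\leq\; S \;\leq\; \|S\|\, I, \]
and pairing with $f$ and using $\langle Sf,f\rangle = \sum_j\langle f,f_j\rangle\langle f_j,f\rangle$ yields precisely the operator-valued frame bounds (with new constants $1/\|S^{-1}\|$ and $\|S\|$), completing the proof.

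The main obstacle will be the invertibility of $S$. In an arbitrary Hilbert $C^*$-module, an adjointable operator that is bounded below in norm need not have closed, let alone complemented, range, so it need not be invertible; this is a genuine gap with the Hilbert-space situation. Here the finitely or countably generated hypothesis is essential, as it permits Kasparov's stabilization theorem to embed $\h$ as an orthogonally complemented summand of $\ell^{2}(A)$, reducing the question to a setting where a positive, bounded-below, adjointable operator is forced to be invertible. All remaining bookkeeping is routine $C^{*}$-algebra; the entire depth of the theorem lies in translating norm-boundedness below into operator-theoretic invertibility.
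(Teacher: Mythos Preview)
The paper does not actually prove this theorem: it is stated in the preliminaries and attributed to Aramba\v{s}i\'c \cite{A} and Jing \cite{jing}, with no proof given. So there is no ``paper's own proof'' to compare against directly. That said, the paper does prove the exactly analogous statement for $C$-controlled frames (Theorem~3.6), and the argument there---which is the one inherited from \cite{A}---differs from yours at the crucial step.

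Your forward direction is correct. For the converse, the route in \cite{A} (and mirrored in Theorem~3.6 of this paper) avoids Kasparov stabilization. Once one has the positive adjointable operator $S$, one passes to $S^{1/2}$: the norm hypothesis gives $\sqrt{C}\,\|f\|\leq\|S^{1/2}f\|\leq\sqrt{D}\,\|f\|$, and then one invokes the lemma (quoted here as Lemma~3.5) that for an adjointable map, being bounded below in \emph{norm} is equivalent to being bounded below with respect to the \emph{inner product}. Applied to the self-adjoint $S^{1/2}$ this yields $\langle Sf,f\rangle\geq m'\langle f,f\rangle$ directly, which is the module-valued frame lower bound. This is lighter machinery than your stabilization route, though the underlying depth is comparable---that equivalence lemma itself hides a closed-range/complementation argument. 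What your approach buys is a more conceptual picture (invertibility of $S$ in $\mathrm{End}^*_A(\h)$ as the real content); what the Aramba\v{s}i\'c lemma buys is that one never has to discuss ranges or stabilization explicitly.

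One step in your outline is underspecified: you say the finite/countable-generation hypothesis ``lets one show $T$ is adjointable,'' but a bounded $A$-linear map between Hilbert $C^*$-modules need not be adjointable, and stabilization does not supply adjoints. The actual mechanism is to show directly that the synthesis map $\{a_j\}\mapsto\sum_j a_j f_j$ converges and furnishes an adjoint---equivalently, that a norm-Bessel bound already forces a module-Bessel bound. This is itself a nontrivial lemma in \cite{A} and is where the real work of the converse begins; your sketch should flag it rather than fold it into the generation hypothesis.
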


Let $\{f_{j}: j\in J\}$ be a frame in Hilbert $A$-module $\h$ and
$\{g_{j}: j\in J\}$ be a sequence of $\h$. Then $\{g_{j}: j\in J\}$
is called a dual sequence of $\{f_{j}: j\in J\}$ if
\[ f= \sum_{j\in J}\langle f,g_{j}\rangle f_{j}\] for all $f\in \h$.
The sequences $\{f_{j}: j\in J\}$ and $\{g_{j}: j\in J\}$ are called
a dual frame pair when $\{g_{j}: j\in J\}$ is also a frame.

 For the frame $\{f_{j}: j\in J\}$  in Hilbert $A$-module $\h$, the operator $S$ defined by
\[ Sf=\sum_{j\in J}\langle f,f_{j}\rangle f_{j},\,\,\,  f\in \h\] is called the frame operator. It is proved that \cite{FL}, $S$ is
invertible, positive, adjointable and self-adjoint. Since
\[ \langle Sf, f\rangle = \langle \sum_{j\in J}\langle f,f_{j}\rangle f_{j}, f\rangle =  \sum_{j\in J}\langle f,f_{j}\rangle \langle f_{j}, f\rangle, \quad f\in\h\] it follows that
\[C\langle f, f\rangle \leq \langle Sf, f\rangle \leq D\langle f, f\rangle,\quad f\in\h \]
and the following reconstruction formula holds
\[ f= SS^{-1}f = S^{-1}Sf = \sum_{j\in J}\langle S^{-1}f,f_{j}\rangle f_{j}=\sum_{j\in J}\langle f,S^{-1}f_{j}\rangle f_{j} \] for all $f\in \h$.\\
 Let
$\tilde f_{j}= S^{-1}f_{j}$, then
\[f= \sum_{j\in J}\langle f,\tilde{f_{j}}\rangle f_{j}= \sum_{j\in J}\langle f,f_{j}\rangle \tilde{f_{j}},\] for any $f\in\h$.
The sequence $\{\tilde f_{j}: j\in J\} $ is also a frame for $\h$
which is called the canonical dual frame of $\{f_{j}: j\in J\}$.\\

 In \cite{schatt1}, R. Schatten provided a detailed
study of ideals of compact operators using their singular
decomposition. He investigated the operators of the form
$\sum_{j}\lambda_{j}\varphi_{j}\otimes\psi_{j}$ where $(\phi_{j})$
and $(\psi_{j})$ are orthonormal families. In \cite{xxlmult1}, the
orthonormal families were replaced with Bessel and frame sequences
to define Bessel and frame multipliers.
\begin{defn}
Let $\mathcal{\h}_{1}$ and $\mathcal{\h}_{2}$ be Hilbert spaces, let
$(\psi_{j})\subseteq\mathcal{\h}_{1}$ and
$(\phi_{j})\subseteq\mathcal{H}_{2}$ be Bessel sequences. Fix
$m=(m_j)\in l^{\infty}$. The operator $${\bf M}_{m, ( \phi_j),
(\psi_j)} : \mathcal{\h}_{1} \rightarrow \mathcal{\h}_{2}$$ defined
by
$$ {\bf M}_{m, (\phi_j), ( \psi_j )} (f)  =  \sum \limits_{j\in J} m_j
\langle f,\psi_j\rangle \phi_j $$ for all $f\in\h_1$ is called the
Bessel multiplier for the Bessel sequences $(\psi_{j})$ and
$(\phi_{j})$. The sequence $m$ is called the symbol of {\bf M}. For
frames, this operator is called frame multiplier, for Riesz
sequences a Riesz multiplier.
\end{defn}
Basic properties and some applications of this operator for Bessel
sequences,
 frames and Riesz basis have been proved by Peter Balazs in his Ph.D habilation \cite{peterthesis}.
 Recently, the concept of multipliers extended and introduced for continuous frames \cite{raba}, fusion frames \cite{aris}, $p$-Bessel
  sequences \cite{AsPe}, generalized frames \cite{5}, controlled frames \cite{rafe}, Banach frames \cite{FaOsRa,RAS}, Hilbert $C^*$-modules \cite{KM} and etc.

\begin{defn}\label{mul}
Let $A$ be a unital $C^*$-algebra, $J$ be a finite or countable
index set and $\{f_{j}: j\in J\}$ and $\{g_{j}: j\in J\}$ be Hilbert
$C^*$-modules Bessel sequences for $\h$. For $m=\{m_j\}_{j\in J}\in\ell^\infty(A)$ with $m_j \in Z(A)$, for each $j \in J$, 
the operator $M_{m,\{f_j\},\{g_j\}}:\h\to\h $ defined by
$$
M_{m,\{f_j\},\{g_j\}} f:=\sum_{j\in J}m_j\langle f,f_j\rangle
g_j,\quad f\in\h
$$

called the multiplier operator of $\{f_{j}: j\in J\}$ and $\{g_{j}:
j\in J\}$. The sequence $m=\{m_j\}_j\in J$ called the symbol of
$M_{m,\{f_j\},\{g_j\}}$.
\end{defn}
The symbol of  $m$ has important role in the studying of multiplier
operators. In this paper $m$ is always a sequence $m=\{m_j\}_{j\in J}\in\ell^\infty(A)$ with $m_j \in Z(A)$, for each $j \in J$.

\section{Controlled Frames In Hilbert $C^*$-modules}

In this section,  we introduce  controlled frames in Hilbert
$C^*$-modules and we show that they share many useful properties
with their corresponding notions in Hilbert space. We also give a
characterization of controlled frames in Hilbert $C^*$-module.

\begin{defn}
Let $\h$ be a Hilbert $C^*$-module and $C \in GL(\h)$. A frame
controlled by the operator $C$ or $C$-controlled frame in Hilbert
C*-module $\h$ is a family of vectors $\Psi = \{\psi_j \in \h : j
\in J\}$, such that there exist two constants $m > 0$ and $M <
\infty$ satisfying \[ m\langle f,f \rangle \leq \sum_{j\in J}
\langle f, \psi_j \rangle \langle C\psi_j , f\rangle \leq M\langle
f,f \rangle,\]  for all $f \in \h$.

Likewise, $\Psi = \{\psi_j \in \h : j \in J\}$ is called a $C$-controlled Bessel sequence with bound $M$ if there exists $M < \infty$ such that
\[  \sum_{j\in J} \langle f, \psi_j \rangle \langle C\psi_j , f\rangle \leq M\langle f,f \rangle,\]
for every $f \in \h$, where the sum in the inequality is convergent
in norm.

If $m=M$, we call this $C$-controlled frame a tight $C$-controlled
frame, and if $m=M=1$ it is called a Parseval $C$-controlled frame.

\end{defn}
Every frame is a $I$-controlled frame.  Hence controlled frames are
generalizations of frames.

The proof of the following lemma is straightforward.
\begin{lem}\label{controlled Bessel}
Let $\h$ be a Hilbert $C^*$-module and $C \in GL(\h)$. A sequence
$\Psi = \{\psi_j \in \h : j \in J\}$ is $C$-controlled Bessel
sequence in Hilbert C*-module $\h$ if and only if the operator \[S_C
f =\sum_{j\in J} \langle f, \psi_j \rangle C\psi_j\] is well defined
and there exists constant $M < \infty$ such that
 \[  \sum_{j\in J} \langle f, \psi_j \rangle \langle C\psi_j , f\rangle \leq M\langle f,f \rangle,\]
 for every $f \in \h$.
\end{lem}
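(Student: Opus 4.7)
Since the statement is an equivalence, I would prove the two implications separately; the backward direction is essentially immediate, while the forward direction carries the content.

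For the backward direction ($\Leftarrow$), assume the series defining $S_Cf=\sum_{j\in J}\langle f,\psi_j\rangle C\psi_j$ converges in the norm of $\h$ for every $f\in\h$, and that the upper bound $\sum_{j\in J}\langle f,\psi_j\rangle\langle C\psi_j,f\rangle\leq M\langle f,f\rangle$ holds. Continuity of the $A$-valued inner product $\langle\,\cdot\,,f\rangle$ transfers the norm convergence of the $\h$-valued partial sums to norm convergence of the $A$-valued partial sums $\sum_{j\in J}\langle f,\psi_j\rangle\langle C\psi_j,f\rangle=\langle S_Cf,f\rangle$ in $A$. Together with the assumed upper bound this is exactly the definition of a $C$-controlled Bessel sequence.

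For the forward direction ($\Rightarrow$), assume $\Psi$ is a $C$-controlled Bessel sequence with bound $M$, so that the $A$-valued sum $\sum_j\langle f,\psi_j\rangle\langle C\psi_j,f\rangle$ converges in norm for every $f$. The task is to show that the $\h$-valued sum $\sum_j\langle f,\psi_j\rangle C\psi_j$ converges in the norm of $\h$. My strategy is to show that the net of partial sums $T_Ff:=\sum_{j\in F}\langle f,\psi_j\rangle C\psi_j$, indexed by finite $F\subseteq J$ directed by inclusion, is Cauchy in $\h$. For a finite $G\subseteq J$ (playing the role of the set-difference $F_2\setminus F_1$ of two partial-sum indices), I would use the module-norm identity
\[
\|T_Gf\|_{\h}=\sup_{\|g\|_{\h}\leq 1}\|\langle T_Gf,g\rangle\|_A=\sup_{\|g\|_{\h}\leq 1}\Bigl\|\sum_{j\in G}\langle f,\psi_j\rangle\langle C\psi_j,g\rangle\Bigr\|_A,
\]
and apply the Cauchy--Schwarz inequality in $A$ to bound the right-hand side by the tail of the $C$-controlled Bessel sum, which is small by hypothesis. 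Passing to the limit gives the Cauchy property, and the resulting limit is the desired $S_Cf$; the Bessel bound is then inherited as in the backward direction.

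The principal obstacle is the Cauchy--Schwarz step. The naive pairing $a_j^{*}=\langle f,\psi_j\rangle$, $b_j=\langle C\psi_j,g\rangle$ produces the factor $\sum\langle f,\psi_j\rangle\langle\psi_j,f\rangle$, i.e.\ the \emph{uncontrolled} Bessel sum, for which the hypothesis provides no bound. I expect one must exploit the invertibility of $C\in GL(\h)$ by inserting $CC^{-1}$ (or, when $C$ is adjointable, using a polar-type factorisation) so that both factors emerging from Cauchy--Schwarz take the controlled form $\sum\langle h,\psi_j\rangle\langle C\psi_j,h\rangle$, evaluated at $h=f$ and a suitable $h$ built from $g$ via $C^{-1}$. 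This is precisely where the hypothesis $C\in GL(\h)$, rather than merely $C\in B(\h)$, is essential to the argument.
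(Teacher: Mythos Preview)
The paper does not actually prove this lemma; it only remarks that ``the proof of the following lemma is straightforward'' and moves on. So there is no argument in the paper to compare yours against at the level of technique.

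Your backward direction is correct and is presumably what the authors have in mind. Your analysis of the forward direction is already more careful than the paper's treatment: you correctly isolate the real content, namely upgrading norm convergence of the $A$-valued series $\sum_{j}\langle f,\psi_j\rangle\langle C\psi_j,f\rangle$ to norm convergence of the $\h$-valued series $\sum_{j}\langle f,\psi_j\rangle C\psi_j$, and you honestly flag the obstruction in the Cauchy--Schwarz step. That obstruction is genuine: the naive pairing produces the uncontrolled quantity $\sum_{j}\langle f,\psi_j\rangle\langle\psi_j,f\rangle$, which the hypothesis does not bound. Your instinct to exploit $C\in GL(\h)$ is the right one, but carrying it out cleanly needs $C$ to be adjointable and typically some positivity or commutation hypothesis so that both Cauchy--Schwarz factors take the controlled form; the paper is silent on all of this. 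In short, your proposal already goes beyond what the paper offers, and the gap you identify is one the authors simply do not address.
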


By using  Lemma \ref{controlled Bessel} we have the following
definition.

\begin{defn}
  Let $\h$ be a Hilbert $C^*$-module and $C \in GL(\h)$. Assume the sequence $\Psi
= \{\psi_j \in \h : j \in J\}$ is $C$-controlled Bessel sequence in
Hilbert C*-module $\h$. The operator
\[S_C f =\sum_{j\in J} \langle f, \psi_j \rangle C\psi_j\]
is  called $C$-controlled frame operator.
\end{defn}

According to the following proposition, the main properties of
controlled frame operators in Hilbert $C^*$-modules are the same as
controlled frame operators in Hilbert spaces.

\begin{prop}\label{frame operator}
Let $\h$ be a Hilbert $C^*$-module on $C^*$-algebra $A$ and $C \in
End_A^*(U,V)$. Assume $\{\psi_j : j \in J\}$ is a C-controlled frame
in Hilbert $C^*$-module $\h$ with bounds $m,M>0$ Then $C$-controlled
frame operator $S_{C}$ is invertible, positive, adjointable and
self-adjoint.
\end{prop}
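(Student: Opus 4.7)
The natural strategy is to factor $S_C = CS$, where $S$ is the ordinary frame operator of $\{\psi_j\}$, and then to read off adjointability, self-adjointness, positivity and invertibility from well-known facts about $S$ together with the two-sided inequality in the definition of a controlled frame.

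First, since $\Psi$ is a $C$-controlled Bessel sequence, Lemma~\ref{controlled Bessel} guarantees that the series $S_Cf = \sum_{j\in J}\langle f,\psi_j\rangle C\psi_j$ converges in $\h$ for every $f\in\h$. Because $C\in GL(\h)$, the inverse $C^{-1}$ is bounded and $A$-linear, and applying it term by term yields
\[
C^{-1}S_Cf \;=\; \sum_{j\in J}\langle f,\psi_j\rangle\,\psi_j \;=:\; Sf ,
\]
so $\{\psi_j\}$ is a (standard) Bessel sequence in $\h$, and the identity $S_C=CS$ holds on all of $\h$. The usual theory for frame-type operators in Hilbert $C^*$-modules then gives that $S$ is bounded, adjointable, and self-adjoint; since $C$ is adjointable as well, $S_C=CS$ is adjointable with $S_C^{*}=S^{*}C^{*}=SC^{*}$.

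For self-adjointness I would observe that
\[
\langle S_Cf,f\rangle \;=\; \sum_{j\in J}\langle f,\psi_j\rangle\langle C\psi_j,f\rangle
\]
is, by the controlled-frame inequality, sandwiched between $m\langle f,f\rangle$ and $M\langle f,f\rangle$ in the $C^{*}$-algebra ordering, and is therefore a self-adjoint element of $A$ for every $f$. Using adjointability this reads $\langle S_Cf,f\rangle=\langle S_C^{*}f,f\rangle$ for all $f$, and the complex polarization identity — which only uses the scalars $\{1,-1,i,-i\}$ and so transfers verbatim to a Hilbert $C^{*}$-module — forces $S_C=S_C^{*}$. Positivity is then immediate from $\langle S_Cf,f\rangle\geq m\langle f,f\rangle\geq 0$. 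Finally, $S_C$ is a self-adjoint element of the $C^{*}$-algebra of adjointable operators on $\h$ with $mI\leq S_C\leq MI$ and $m>0$, so $\sigma(S_C)\subseteq [m,M]$ avoids $0$ and $S_C$ is invertible, with $M^{-1}I\leq S_C^{-1}\leq m^{-1}I$.

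The step I expect to need the most care is self-adjointness: unlike in the Hilbert-space case, where Hermitian symmetry of the inner product is automatic, here $\langle S_C\cdot,\cdot\rangle$ is $A$-valued and its self-adjointness must first be extracted from the order inequality defining the controlled frame. Polarization in the $C^{*}$-module setting then bridges the gap between the diagonal of the sesquilinear form $\langle(S_C-S_C^{*})\cdot,\cdot\rangle$ and the full form, which is what allows the reduction $S_C=S_C^{*}$ to go through.
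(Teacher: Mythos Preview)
Your argument is correct and tracks the paper's proof closely: both factor $S_C=CS$ (the paper does this implicitly when computing $\langle S_Cf,g\rangle=\langle Sf,C^*g\rangle$), obtain $S_C^{*}=SC^{*}$, and then use the controlled-frame inequality $m\langle f,f\rangle\leq\langle S_Cf,f\rangle\leq M\langle f,f\rangle$ to conclude positivity and invertibility via $mI\leq S_C\leq MI$.

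The one place you diverge is self-adjointness, which you flag as the delicate step and prove via the polarization identity applied to the $A$-valued form $\langle (S_C-S_C^{*})\cdot,\cdot\rangle$. The paper instead takes the shorter route: once $S_C$ is adjointable and $\langle S_Cf,f\rangle\geq 0$ for all $f$, positivity is immediate, and then self-adjointness follows from the general fact that every positive (adjointable) operator is self-adjoint. Your polarization argument is valid but unnecessary here; the paper's ordering (adjointable $\to$ positive $\to$ self-adjoint) avoids it entirely. On the other hand, you are more careful than the paper in justifying that $S$ is well defined, by first applying $C^{-1}$ to the convergent series for $S_C$.
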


\begin{proof}
Let $f,g \in \h$ and $ Sf= \sum_{j\in J} \langle f, \psi_j\rangle
\psi_j $ be frame operator of  $\{\psi_j : j \in J\}$, then
\begin{align*}
    \langle S_Cf,g \rangle  & =\left\langle \sum_{j\in J} \langle
f, \psi_j\rangle C\psi_j, g \right\rangle= \sum_{j\in J} \langle f,
\psi_j\rangle \langle C\psi_j , g\rangle \\ & = \sum_{j\in J}
\langle f, \psi_j\rangle \langle \psi_j , C^* g\rangle = \langle Sf,
C^* g\rangle = \langle f, SC^* g\rangle.
\end{align*}
Therefore controlled frame operator $S_C$ is adjointable and $S_C^*=
SC^*$.

Since \[\langle S_Cf,f \rangle =\left\langle \sum_{j\in J} \langle
f, \psi_j\rangle C\psi_j, f \right\rangle= \sum_{j\in J} \langle f,
\psi_j\rangle \langle C\psi_j , f\rangle. \] It follows that
\[m\langle f,f \rangle \leq \langle S_C f,f\rangle \leq M\langle f,f\rangle,\]
and \[mId_{\h}\leq S_C \leq MId_{\h}.\] So $S_C$ is positive and
invertible. By regard to this fact that every positive operator in
Banach space is self-adjoint, $C$-controlled frame operator $S_{C}$
is self-adjoint.
\end{proof}

Now, by using the following lemma in \cite{A}, we give a
characterization of controlled frames.

\begin{lem}\cite{A}\label{A}
 Let $A$ be a $C^*$-algebra, $U$ and $V$ two Hilbert $A$-modules, and $T \in End_A^*(U,V)$. Then the following statements are equivalent:
\begin{enumerate}
         \item T is surjective;
         \item  $T^*$ is bounded below with respect to norm, that is, there is $m > 0$ such that
          $\|T^*f\|\geq m\|f\|$ for all $f \in U$;
         \item $T^*$ is bounded below with respect to the inner product, that is, there is $m'> 0$ such that
$\langle T^*f, T^*f\rangle \geq m'\langle f, f\rangle$ for all $f
\in U$.
       \end{enumerate}
\end{lem}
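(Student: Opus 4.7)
The plan is to prove the equivalences cyclically, $(1)\Rightarrow(2)\Rightarrow(3)\Rightarrow(1)$. Two of these are essentially routine adaptations of well-known Hilbert-space facts to the adjointable setting, but $(2)\Rightarrow(3)$ will be the principal obstacle, since upgrading a norm bound to a $C^*$-valued inequality is not automatic over a general $C^*$-algebra.

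For $(1)\Rightarrow(2)$, I would invoke the open mapping theorem (applicable because $U,V$ are Banach spaces): surjectivity of $T$ produces $r>0$ such that every $g\in V$ with $\|g\|\le r$ admits a preimage $f\in U$ with $\|f\|\le 1$. For arbitrary nonzero $g$, rescaling and choosing $f$ with $\|f\|\le 1$ and $Tf=(r/(2\|g\|))g$, then combining adjointness with the Cauchy--Schwartz inequality recorded in the preliminaries, yields $\|T^*g\|\ge \|\langle T^*g,f\rangle\|=\|\langle g,Tf\rangle\|\ge (r/2)\|g\|$. For $(3)\Rightarrow(1)$, taking norms in (3) directly gives (2); in particular $T^*$ is injective with closed range, so by the closed-range theorem for adjointable operators on Hilbert $C^*$-modules, $T$ also has closed range. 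The orthogonal decomposition $V=\mathrm{range}(T)\oplus\ker(T^*)$ (available because the range is closed and $T$ is adjointable) combined with $\ker(T^*)=\{0\}$ forces $\mathrm{range}(T)=V$.

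The heart of the argument is $(2)\Rightarrow(3)$. I would shift attention to the positive adjointable operator $S:=TT^*\in End_A^*(V)$, noting that $\langle Sf,f\rangle=\langle T^*f,T^*f\rangle$. Combining the hypothesis with Cauchy--Schwartz gives $\|Sf\|\cdot\|f\|\ge \|\langle Sf,f\rangle\|=\|T^*f\|^2\ge m^2\|f\|^2$, so $\|Sf\|\ge m^2\|f\|$. Since $S$ is adjointable, self-adjoint, and bounded below in norm, it is injective with closed range; the orthogonal decomposition theorem then gives $V=\mathrm{range}(S)\oplus\ker(S)=\mathrm{range}(S)$, so $S$ is invertible in the $C^*$-algebra $End_A^*(V)$. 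Applying the spectral bound $a\ge\|a^{-1}\|^{-1}\cdot 1$ valid for any positive invertible element $a$ of a unital $C^*$-algebra, I obtain $S\ge \|S^{-1}\|^{-1} I_V$, which is exactly (3) with $m'=\|S^{-1}\|^{-1}$. This spectral-theoretic detour through $TT^*$ inside the $C^*$-algebra $End_A^*(V)$ is the only non-routine ingredient, and is what replaces the usual Hilbert-space functional calculus.
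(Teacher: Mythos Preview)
The paper does not supply a proof of this lemma; it is simply quoted from Aramba\v{s}i\'{c}'s paper \cite{A} as an external input, so there is no in-paper argument to compare against.

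Your proposed proof is correct and is essentially the standard route to this result. The cycle $(1)\Rightarrow(2)\Rightarrow(3)\Rightarrow(1)$ is the natural one, and your identification of $(2)\Rightarrow(3)$ as the non-formal step is exactly right: the passage from a norm lower bound to an $A$-valued inequality genuinely requires the $C^*$-algebraic structure of $End_A^*(V)$. Your mechanism---showing $S=TT^*$ is bounded below in norm, hence injective with closed range, then invoking the closed-range/complementation theorem for adjointable operators (Lance, Theorem~3.2) together with self-adjointness of $S$ to get invertibility, and finally applying the spectral inequality $S\geq \|S^{-1}\|^{-1}I_V$ in the unital $C^*$-algebra $End_A^*(V)$---is sound at each stage. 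The only places worth a small explicit citation when you write this up are (i) the closed-range theorem in the Hilbert $C^*$-module setting, which is genuinely deeper than its Hilbert-space analogue, and (ii) the fact that $End_A^*(V)$ is a unital $C^*$-algebra so that the spectral bound applies. A minor cosmetic point: the statement as printed has ``for all $f\in U$'' in (2) and (3), which should read $f\in V$ since $T^*:V\to U$; your proof tacitly (and correctly) works over $V$.
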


\begin{thm}
Let $\h$ be a Hilbert $C^*$-module and $C \in GL(\h)$. A sequence
$\Psi = \{\psi_j \in \h : j \in J\}$ is $C$-controlled frame in
Hilbert $C^*$-module $\h$ if and only if there exists constants $m >
0$ and $M < \infty$ such that

\begin{equation}\label{ineq}
 m\| f\|^2 \leq \left\| \sum_{j\in J} \langle f, \psi_j \rangle
\langle C\psi_j , f\rangle \right\| \leq M\|f \|^2,
\end{equation}

 for all $f \in \h$.
\end{thm}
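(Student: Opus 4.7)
Plan: I would treat the two directions separately.

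The forward direction is immediate. I would observe that the $A$-valued bounds $m\langle f,f\rangle \le \sum_j \langle f,\psi_j\rangle\langle C\psi_j, f\rangle \le M\langle f, f\rangle$ are inequalities among positive elements of $A$ (the lower bound forces the middle sum to dominate the positive element $m\langle f,f\rangle$, so it is itself positive), and then apply the $C^*$-norm, which is monotone on the positive cone. Using $\|c\langle f,f\rangle\|_A = c\|f\|^2$ for $c\ge 0$, this delivers (\ref{ineq}) at once.

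For the converse I would mimic the strategy of Theorem \ref{equivalent frame}, applying Lemma \ref{A} through the $C$-controlled frame operator. First I would rewrite the sum as $\langle S_C f, f\rangle$ using Lemma \ref{controlled Bessel}. The upper bound in (\ref{ineq}), together with the Cauchy--Schwarz inequality in $\h$, is enough to verify that $S_C$ is a bounded adjointable operator (with adjoint $SC^*$, exactly as in Proposition \ref{frame operator}). Next I would use the defining sum together with the lower norm bound to show $\langle S_C f, f\rangle \ge 0$ in $A$, so that $S_C$ is positive and self-adjoint; the continuous functional calculus then supplies a positive square root $T := S_C^{1/2}$ with $T = T^*$. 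From the identity
\[
\|Tf\|^2 \;=\; \|\langle Tf,Tf\rangle\| \;=\; \|\langle S_C f, f\rangle\|,
\]
the hypothesis (\ref{ineq}) becomes $\sqrt m\,\|f\| \le \|Tf\| \le \sqrt M\,\|f\|$. Applying Lemma \ref{A} to the self-adjoint $T$ transfers the norm lower bound into the $A$-valued inequality $\langle S_C f, f\rangle = \langle Tf, Tf\rangle \ge m'\langle f, f\rangle$ for some $m' > 0$. The norm upper bound $\|T\| \le \sqrt M$, combined with the standard operator inequality $T^*T \le \|T\|^2 I$, yields $\langle S_C f, f\rangle \le M\langle f, f\rangle$. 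Together these give the $C$-controlled frame inequalities.

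The main obstacle will be extracting positivity of $S_C$ in the $C^*$-module order from the norm hypothesis (\ref{ineq}) alone. In the scalar Hilbert setting this step would be automatic, but here one must argue $\langle S_C f, f\rangle \ge 0$ in $A$ for every $f\in\h$, using the precise form of the defining sum $\sum_j\langle f,\psi_j\rangle\langle C\psi_j,f\rangle$ together with the lower bound in (\ref{ineq}). Once positivity and the square root are in hand, everything else is a routine transfer of inequalities via Lemma \ref{A} and the inequality $T^*T\le\|T\|^2 I$.
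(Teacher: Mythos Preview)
Your approach is essentially identical to the paper's: for the converse the paper also passes to the square root $S_C^{1/2}$, reads (\ref{ineq}) as two-sided norm bounds $\sqrt m\,\|f\|\le\|S_C^{1/2}f\|\le\sqrt M\,\|f\|$, and then invokes Lemma~\ref{A} to upgrade these to $A$-valued inequalities. The one point where the write-ups differ is exactly the obstacle you isolate: to form $S_C^{1/2}$ one needs $S_C\ge 0$ in the module order, and the paper obtains this simply by citing Proposition~\ref{frame operator}. Since Proposition~\ref{frame operator} is stated under the hypothesis that $\{\psi_j\}$ is already a $C$-controlled frame, that citation is, strictly speaking, circular; the paper does not supply an independent argument for $\langle S_C f,f\rangle\ge 0$ from (\ref{ineq}) alone. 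So your identification of positivity as the crux is accurate, and the paper's proof does not resolve it any more than your outline does.
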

\begin{proof}
Let the sequence $\Psi = \{\psi_j \in \h : j \in J\}$ is
$C$-controlled frame in Hilbert $C^*$-module $\h$. By the definition
of $C$-controlled frame inequality (\ref{ineq}) holds.

Now suppose that the inequality (\ref{ineq}) holds. From Proposition
\ref{frame operator} $C$-controlled frame operator $S_C$ is
positive, self-adjoint and invertible, hence $\langle
S_C^\frac{1}{2}f, S_C^\frac{1}{2}f\rangle = \langle S_Cf, f\rangle =
\sum_{j\in J} \langle f, \psi_j \rangle \langle C\psi_j , f\rangle
$. So we have $\sqrt{m}\|f\|\leq \|S_C^\frac{1}{2}f\|\leq
\sqrt{M}\|f\|$ for any $f\in \h$. According  to Lemma \ref{A} there
are constants $C, D>0$ such that
\[ C\| f\|^2 \leq \left\| \sum_{j\in J} \langle f,
\psi_j \rangle \langle C\psi_j , f\rangle \right\| \leq D\|f \|^2,\]
which implies that $\Psi = \{\psi_j \in \h : j \in J\}$ is
$C$-controlled frame in Hilbert $C^*$-module $\h$.
\end{proof}

 We prove the following theorems to show that every controlled frame
 is a classical frame.

\begin{thm}\label{efo}
Let $\h$ be a Hilbert $C^*$-module on $C^*$-algebra $A$  and
$\{\psi_j\}_{j\in J}$ be a sequence in $\h$. Assume $S$ is the frame
operator associated with $\{\psi_j\}_{j\in J}$ i.e. $Sf=\sum_{j\in
J} \langle f, \psi_j\rangle \psi_j$. Then the following conditions
are equivalent:
\begin{enumerate}
    \item $\{\psi_j\}_{j\in J}$ is a frame with frame bounds $C$ and
    $D$;
    \item  We have $CId_{\h}\leq S \leq DId_{\h}$.
\end{enumerate}

\end{thm}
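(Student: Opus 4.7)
The plan is to reduce both conditions to a single identity, namely
\[
\langle Sf,f\rangle \;=\; \sum_{j\in J}\langle f,\psi_j\rangle\langle \psi_j,f\rangle,
\]
which follows from the $A$-linearity and continuity of the inner product once the series defining $Sf$ converges. Convergence in turn is guaranteed in the $(1)\Rightarrow(2)$ direction by the upper frame bound (so that $\{\psi_j\}_{j\in J}$ is Bessel and $S$ is a well-defined bounded adjointable operator, as reviewed in Section~2), and in the $(2)\Rightarrow(1)$ direction by the hypothesis that $S$ itself is a bounded operator satisfying $S\leq DId_\h$.

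For $(1)\Rightarrow(2)$: starting from the defining frame inequality
\[
C\langle f,f\rangle \;\leq\; \sum_{j\in J}\langle f,\psi_j\rangle\langle \psi_j,f\rangle \;\leq\; D\langle f,f\rangle,
\]
I would substitute the identity above to obtain $C\langle f,f\rangle\leq \langle Sf,f\rangle\leq D\langle f,f\rangle$ for every $f\in\h$. Since $S$ is self-adjoint (this is essentially the same computation used in Proposition \ref{frame operator} with $C=I$), the inequality $\langle (S-CId_\h)f,f\rangle\geq 0$ for all $f$, together with self-adjointness, is precisely the operator inequality $CId_\h\leq S$, and similarly for the upper bound.

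For $(2)\Rightarrow(1)$: I would read the operator inequality $CId_\h\leq S\leq DId_\h$ as the pointwise statement $C\langle f,f\rangle\leq \langle Sf,f\rangle\leq D\langle f,f\rangle$ for every $f\in\h$, and then invoke the identity in the opposite direction to recover the frame inequality \eqref{101}.

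The only point requiring care is the justification of the identity $\langle Sf,f\rangle=\sum_j\langle f,\psi_j\rangle\langle \psi_j,f\rangle$, i.e.\ the interchange of the $A$-valued inner product with the infinite sum; this is standard once one knows the series for $Sf$ converges in $\h$, which is exactly where the Bessel/upper-bound assumption enters. After that, the equivalence is a direct translation between the pointwise inner-product formulation of a frame and the operator-inequality formulation of its frame operator, with no further obstacle.
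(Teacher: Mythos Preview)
Your argument is correct. For $(1)\Rightarrow(2)$ it coincides with the paper's: both reduce to the computation $\langle Sf,f\rangle=\sum_{j}\langle f,\psi_j\rangle\langle\psi_j,f\rangle$ (the paper simply cites Proposition~\ref{frame operator} with $C=I$).

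For $(2)\Rightarrow(1)$ the routes differ. You read $CId_{\h}\leq S\leq DId_{\h}$ directly as the $A$-valued inequality $C\langle f,f\rangle\leq\langle Sf,f\rangle\leq D\langle f,f\rangle$ and substitute the identity to recover the frame inequality~\eqref{101} in one step. The paper instead passes to norms: from $CId_{\h}\leq S\leq DId_{\h}$ it derives
\[
C\|f\|^{2}\leq\|\langle Sf,f\rangle\|=\Bigl\|\sum_{j\in J}\langle f,\psi_j\rangle\langle\psi_j,f\rangle\Bigr\|\leq D\|f\|^{2},
\]
and then invokes Theorem~\ref{equivalent frame} to conclude that $\{\psi_j\}$ is a frame. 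Your route is more elementary and self-contained; the paper's route illustrates how the norm characterisation of frames (Theorem~\ref{equivalent frame}) can be used, at the cost of an extra citation and a slightly longer chain of inequalities. Both are valid, and both rely on the same key identity and on $S$ being well defined, so there is no missing ingredient in your proposal.
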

\begin{proof}
$(1)\Rightarrow (2).$ This is implied by proof of Proposition
\ref{frame operator}.

$(2)\Rightarrow (1).$ Let $T$ be the analysis operator associated
with $\{\psi_j\}_{j\in J}$. Since $S=T^*T $  and hence
$\|T\|^2=\|S\|$ for any $f\in \h$ we obtain
\[\|\sum_{j\in J}\langle f, \psi_j\rangle \langle \psi_j, f\rangle\|=\|\langle Sf, f\rangle \|\leq \|Sf\|\|f\|^2\leq D\|f\|^2.\]
Also, for all $f\in \h$,
\[\|\langle Sf, f\rangle \| \geq \|\langle Cf, f\rangle\| = C\|f\|^2. \]
Therefor for all $f\in \h$
\[C\|f\|^2 \leq \|\sum_{j\in J}\langle f, \psi_j\rangle \langle \psi_j, f\rangle\| \leq D\|f\|^2. \]
Now, by Theorem \ref{equivalent frame} proof is complete.
\end{proof}

\begin{thm}\label{invertible}\cite{C}
Let $X$ be a Banach space, $U:X\rightarrow X$ a bounded operator and
$\|I-U\|< 1$. Then $U$ is invertible.
\end{thm}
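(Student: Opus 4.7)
The plan is to construct $U^{-1}$ explicitly as a Neumann-type series. Setting $V := I-U$ turns the hypothesis into $\|V\| < 1$, and the natural candidate for the inverse is the formal geometric sum $\sum_{n=0}^{\infty} V^n$, by analogy with the scalar identity $(1-v)^{-1} = \sum_{n\geq 0} v^n$ for $|v|<1$.

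First I would establish convergence of this series in $B(X)$. Since $\|V^n\| \leq \|V\|^n$, the partial sums $W_N := \sum_{n=0}^N V^n$ form a Cauchy sequence in $B(X)$ by comparison with the convergent scalar geometric series $\sum_{n\geq 0} \|V\|^n$. Because $X$ is a Banach space, $B(X)$ is complete in the operator norm, so the partial sums converge to a bounded operator $W := \sum_{n=0}^\infty V^n$.

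Next I would verify that $W$ is a two-sided inverse of $U = I - V$ by a telescoping computation. Multiplying out the partial sums gives
\[
(I-V)\, W_N \;=\; W_N - V W_N \;=\; I - V^{N+1},
\]
and $\|V^{N+1}\| \leq \|V\|^{N+1} \to 0$, so passing to the limit $N\to\infty$ yields $UW = I$. The analogous computation $W_N(I-V) = I - V^{N+1}$ gives $WU = I$. Hence $U$ is invertible with $U^{-1}=W$.

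There is no real obstacle here: the argument is the standard Neumann series trick. The only point worth flagging is that the existence of the limit $W$ rests on $B(X)$ being complete in operator norm, which is precisely where the Banach space hypothesis on $X$ is used; over a merely normed space the same series would produce only a Cauchy sequence with no a priori limit.
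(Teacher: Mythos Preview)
Your argument is correct: the Neumann series construction is the standard proof of this result, and you have carried it out cleanly. Note that the paper does not actually prove this theorem; it is quoted from \cite{C} without proof, so there is no in-paper argument to compare against. Your write-up is essentially what one finds in \cite{C} (or any functional analysis text), so nothing further is needed.
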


\begin{thm}\label{Balaz2010}
Let $T:\h\rightarrow \h$ be a linear operator. Then the following
conditions are equivalent:
\begin{enumerate}
    \item There exist $m>0$ and $M< \infty$ such that $mI\leq T\leq
    MI$;
    \item $T\in GL^+(\h)$ i.e.$T$ is invertible and positive.
\end{enumerate}
\end{thm}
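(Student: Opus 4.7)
The plan is to prove each direction using the standard $C^*$-module order theory together with the Neumann-series lemma already recorded as Theorem \ref{invertible}. Throughout, the inequalities are interpreted in the operator order: $S\leq T$ means $\langle Sf,f\rangle\leq\langle Tf,f\rangle$ in $A$ for every $f\in\h$. I will also use two standard facts about adjointable operators on a Hilbert $C^*$-module: (i) if $0\leq S\leq cI$ then $\|S\|\leq c$ (this follows because $a\leq b$ in a $C^*$-algebra implies $\|a\|\leq\|b\|$ for positive elements, combined with $\|S\|=\sup_{\|f\|\leq 1}\|\langle Sf,f\rangle\|$ for positive $S$), and (ii) conjugation by a positive adjointable operator preserves the order, i.e.\ $S\leq U$ implies $R^{*}SR\leq R^{*}UR$.

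For $(1)\Rightarrow(2)$, positivity of $T$ is immediate since $T\geq mI\geq 0$. For invertibility, I would normalize: from $mI\leq T\leq MI$ one obtains $0\leq MI-T\leq (M-m)I$, and fact (i) gives $\|MI-T\|\leq M-m$. Dividing by $M$,
\[
\left\|I-\tfrac{1}{M}T\right\|\;\leq\;\tfrac{M-m}{M}\;<\;1.
\]
Theorem \ref{invertible} then yields that $\tfrac{1}{M}T$ is invertible, hence so is $T$. This is the routine half of the argument.

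For the converse $(2)\Rightarrow(1)$, set $M:=\|T\|$ and $m:=\|T^{-1}\|^{-1}$. Since $T$ is positive and adjointable, the standard inequality $T\leq\|T\|I$ (a consequence of fact (i) applied in reverse, via functional calculus on the positive element $T$) yields $T\leq MI$. For the lower bound I would apply the same principle to $T^{-1}$, which is positive because $T$ is: this gives $T^{-1}\leq\|T^{-1}\|I=\tfrac{1}{m}I$. Now conjugate by $T^{1/2}$ using fact (ii):
\[
I \;=\; T^{1/2}T^{-1}T^{1/2}\;\leq\; T^{1/2}\!\left(\tfrac{1}{m}I\right)\!T^{1/2} \;=\; \tfrac{1}{m}T,
\]
whence $mI\leq T$, as required.

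The main technical point to be careful about is the transition from the $C^*$-module operator order to the operator norm, namely the two facts (i) and (ii) above. In a Hilbert space these are immediate from the spectral theorem, but in the $C^*$-module setting they rely on the identification of $\|S\|$ with $\sup_{\|f\|\leq 1}\|\langle Sf,f\rangle\|$ for positive adjointable $S$ and on the fact that $R^{*}(U-S)R=B^{*}B$ with $B=(U-S)^{1/2}R$ whenever $U-S\geq 0$. Once these are in place, the two implications close up cleanly, with Theorem \ref{invertible} doing the work in one direction and the algebraic manipulation $I=T^{1/2}T^{-1}T^{1/2}$ doing the work in the other.
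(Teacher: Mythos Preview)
Your argument is correct. The forward direction $(1)\Rightarrow(2)$ matches the paper exactly: deduce positivity from $T\geq mI\geq 0$, then use $\|I-M^{-1}T\|\leq (M-m)/M<1$ and Theorem~\ref{invertible} to obtain invertibility.

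For $(2)\Rightarrow(1)$ your route differs from the paper's. The paper sets $M=\|T\|$ and $m=\|T^{-1}\|^{-1}$ (as you do) but argues through \emph{norm} inequalities: from $\|Th\|\leq M\|h\|$ it infers $T\leq MI$, and from $m\|h\|\leq\|Th\|$ together with positivity of $T$ it infers $mI\leq T$. These inferences are valid for positive adjointable operators via the functional calculus, but the paper leaves that transition implicit. You instead stay entirely within the operator order: from $T^{-1}\leq \|T^{-1}\|I$ conjugate by $T^{1/2}$ to get $I\leq m^{-1}T$. This is a cleaner and more self-contained derivation in the $C^*$-module setting, since the step ``$R^{*}SR\leq R^{*}UR$ when $S\leq U$'' is transparently justified by writing $R^{*}(U-S)R=((U-S)^{1/2}R)^{*}(U-S)^{1/2}R$, whereas the paper's passage from a norm lower bound to an order lower bound requires an unremarked appeal to spectral theory. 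Both approaches yield the same constants $M=\|T\|$, $m=\|T^{-1}\|^{-1}$; yours simply makes the order-theoretic content explicit.
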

\begin{proof}
$(1)\Rightarrow (2)$ Since $mI\leq T$, then $0<(T-mI)$, therefore
\[0< \langle (T-mI)f,f\rangle= \langle Tf,f\rangle -\langle mf,
f\rangle\]. Hence
\[0< m\langle f,f\rangle \leq \langle Tf,f\rangle,\]
therefore $T$ is positive.

Since $mI\leq T\leq MI$, then $0\leq I-M^{-1}T\leq
\left(\frac{M-m}{M}\right)$. Therefore $\|I-M^{-1}T\|\leq
\left(\frac{M-m}{M}\right)< 1$. Now by Theorem \ref{invertible} the
operator $T$ is invertible.

$(1)\Rightarrow (2)$ Let $\|T\|=M$. Since $\|Th\|\leq \|T\|\|h\|$
for every $h\in \h$, then $\|Th\|\leq M \|h\|$ for every $h\in \h$,
therefore $T<MI$. Also,
\[\|h\|=\|T^{-1}Th\|\leq \|T^{-1}\|\|Th\|,\]
hence
\[\|T^{-1}\|^{-1}\|h\|\leq \|Th\|\]
for every $h\in \h$. Suppose that $m=\|T^{-1}\|^{-1}$. Since $T$ is
positive, $mI\leq T$.
\end{proof}
The following proposition shows that any $C$-controlled frame is a
controlled frame.
\begin{prop}\label{nessecery}
Let sequence $\Psi = \{\psi_j \in \h : j \in J\}$ be $C$-controlled
frame in Hilbert C*-module $\h$ for $C \in GL(\h)$. Then $\Psi$ is a
frame in Hilbert C*-module $\h$. Furthermore $CS = SC^*$ and so
\[\sum_{j\in J} \langle f, \psi_j \rangle C\psi_j=\sum_{j\in J} \langle f, C\psi_j \rangle \psi_j.\]
\end{prop}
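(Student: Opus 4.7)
The plan is to reduce the problem to the operator identity $S_C = CS$, where $S$ denotes the putative frame operator $Sf = \sum_{j\in J}\langle f,\psi_j\rangle\psi_j$, and then invoke the structural results already established for $S_C$ in Proposition \ref{frame operator}, together with Theorems \ref{efo} and \ref{Balaz2010}.

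First I would show that $S$ is a well-defined, bounded, positive, invertible and self-adjoint operator on $\h$. Since $\Psi$ is $C$-controlled, $S_C$ is well-defined, bounded and invertible by Proposition \ref{frame operator}. The continuity and $A$-linearity of $C^{-1}$ then allow us to commute $C^{-1}$ past the series, giving $Sf = C^{-1}S_C f$, so $S$ is well-defined, bounded and invertible as a composition of two such operators. A direct computation using $\langle af,g\rangle = a\langle f,g\rangle$ and $\langle f,g\rangle^{*} = \langle g,f\rangle$ yields $\langle Sf,g\rangle = \sum_j\langle f,\psi_j\rangle\langle \psi_j,g\rangle = \langle f,Sg\rangle$, hence $S$ is self-adjoint, and setting $g=f$ shows $\langle Sf,f\rangle = \sum_j\langle f,\psi_j\rangle\langle \psi_j,f\rangle \geq 0$, so $S$ is positive.

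At this point Theorem \ref{Balaz2010} produces constants $a,b>0$ with $aI \leq S \leq bI$, and Theorem \ref{efo} then concludes that $\Psi$ is a frame for $\h$. For the identity $CS = SC^{*}$, I would combine two outputs of Proposition \ref{frame operator}: self-adjointness of $S_C$ and the explicit formula $S_C^{*} = SC^{*}$, so that $CS = S_C = S_C^{*} = SC^{*}$. The displayed series identity then unpacks as an operator identity applied to $f$: the left-hand side is $S_Cf = CSf$, while moving $C$ across the inner product via $\langle f,C\psi_j\rangle = \langle C^{*}f,\psi_j\rangle$ rewrites the right-hand side as $SC^{*}f$, and the two coincide by what has just been proved.

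The only delicate point is justifying that $C^{-1}$ commutes past the (possibly infinite) series defining $S_Cf$; this is routine once one notes that the series converges in norm in $\h$ and $C^{-1}$ is a bounded $A$-linear map. Beyond this, the argument is a clean assembly of the characterizations established earlier.
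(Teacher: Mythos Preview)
Your argument is correct and follows essentially the same route as the paper: define $S=C^{-1}S_C$, verify that $S$ is positive, self-adjoint and invertible, then invoke Theorems \ref{Balaz2010} and \ref{efo}; the identity $CS=SC^{*}$ is obtained exactly as you say from $S_C=CS$ together with $S_C^{*}=SC^{*}$ and self-adjointness of $S_C$. If anything, you are more careful than the paper in justifying the positivity and self-adjointness of $S$ and in unpacking the displayed series identity.
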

\begin{proof}
Define $S:=C^{-1}S_C$. Then for every $f\in \h$
\[Sf= C^{-1}S_{C}f=C^{-1}\sum_{j\in J} \langle f, \psi_j \rangle C\psi_j =\sum_{j\in J} \langle f, \psi_j \rangle \psi_j.\]
The operator $S: \h \rightarrow \h$ is positive, invertible and
self-adjoint. Now by Theorem \ref{efo} and Theorem \ref{Balaz2010}
$\Psi$ is a frame.

Since the operator $S_C$ is self-adjoint and $S_C=CS$, then
$CS=S_C=S_C^*=S^*C^*=SC^*.$
\end{proof}
According to the following proposition, if the operator  $C$  is
self-adjoint, then $C$-controlled frames are equivalent to classical
frames. this is a generalization of Proposition 3.3 in \cite{BAG}
for Hilbert $C^*$-module setting.

\begin{prop}
Let $\h$ be a Hilbert $C^*$-module and $C \in GL(\h)$ be
self-adjoint. Then $ \{\psi_j \in \h : j \in J\}$ is a $C$-
controlled frame for $\h$ if and only if $\Psi$ is a  frame for $\h$
and $C$ is positive and commutes with frame operator $S$ .
\end{prop}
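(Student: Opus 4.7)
The plan is to argue both directions using the identity $S_C = CS$ (which follows from linearity/adjointability of $C$) combined with Proposition \ref{nessecery} and Theorem \ref{Balaz2010}.

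For the forward direction, assume $\Psi$ is a $C$-controlled frame. Proposition \ref{nessecery} tells me $\Psi$ is an ordinary frame with frame operator $S$, that $S_C = CS$, and that $CS = SC^{\ast}$. Since $C = C^{\ast}$ by hypothesis, the latter gives $CS = SC$, so $C$ commutes with $S$. It remains to extract positivity of $C$. By Proposition \ref{frame operator}, $S_C = CS$ is positive, and $S$ is positive and invertible (being a frame operator). Because $C$ and $S$ are commuting self-adjoint elements of $\mathrm{End}_A^{\ast}(\h)$, they generate a commutative unital $C^{\ast}$-subalgebra in which $S^{1/2}$ exists, is positive, invertible, and commutes with $C$. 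Then
\[
0 \leq CS = S^{1/2} C S^{1/2},
\]
and since $S^{-1/2}$ is self-adjoint and invertible, conjugation preserves positivity:
\[
C = S^{-1/2}\bigl(S^{1/2} C S^{1/2}\bigr) S^{-1/2} \geq 0.
\]

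For the converse, assume $\Psi$ is a frame with frame operator $S$, and that $C \geq 0$ commutes with $S$. Using $A$-linearity and adjointability of $C$, for every $f \in \h$,
\[
S_C f = \sum_{j \in J} \langle f, \psi_j\rangle\, C\psi_j = C\sum_{j \in J} \langle f, \psi_j\rangle \psi_j = CSf,
\]
so $S_C = CS$. Now $C$ and $S$ are commuting, positive, and invertible, hence $CS$ is positive (product of commuting positives is positive) and invertible. By Theorem \ref{Balaz2010} there exist constants $m > 0$ and $M < \infty$ such that $mI \leq CS \leq MI$. Pairing with $f$,
\[
m\langle f,f\rangle \leq \langle S_C f, f\rangle = \sum_{j \in J}\langle f, \psi_j\rangle\langle C\psi_j, f\rangle \leq M\langle f,f\rangle,
\]
which is exactly the $C$-controlled frame condition.

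The main obstacle I expect is the positivity of $C$ in the forward direction: knowing only that $CS \geq 0$ with $S$ positive and $C$ self-adjoint does not immediately give $C \geq 0$ in a general $C^{\ast}$-algebra, so one must exploit commutativity of $C$ and $S$ to factor $CS$ symmetrically through $S^{1/2}$ (or, equivalently, pass to the commutative $C^{\ast}$-subalgebra they generate and apply Gelfand). Everything else is bookkeeping built on Proposition \ref{nessecery} and Theorem \ref{Balaz2010}.
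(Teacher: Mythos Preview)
Your proof is correct and follows essentially the same route as the paper: both directions hinge on the identity $S_C = CS$, Proposition~\ref{nessecery}, and Theorem~\ref{Balaz2010}. Your treatment of the positivity of $C$ in the forward direction (via conjugation by $S^{-1/2}$ using commutativity) is more explicit and careful than the paper's, which simply asserts $C = S_C S^{-1}$ is positive as a product of commuting positives; but the underlying idea is the same.
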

\begin{proof}
Let $ \{\psi_j : j \in J\}$ be a $C$-controlled frame for $\h$. Then
from Proposition \ref{nessecery} $ \{\psi_j  : j \in J\}$ is a frame
$\h$ and $C$ is  commutes with frame operator $S$. Therefore
$C=S_CC^{-1}$ is positive.

\par For the converse, we note that, if $ \{\psi_j : j \in J\}$ is a
frame, then the frame operator $S$ is positive and invertible.
Therefore $CS=S_C$ is positive and invertible. Now, by Theorem
\ref{Balaz2010} $ \{\psi_j : j \in J\}$ is a $C$-controlled frame
for $\h$.
\end{proof}

\section{Multipliers of controlled frames and weighted frames in Hilbert $C^*$-modules}

In this section, we generalize the concept of multipliers of frames
for controlled frames in Hilbert $C^*$-module. Then we investigate
weighted frames in Hilbert $C^*$-modules and verify their relation
to controlled frames and multiplier operators.

\begin{prop}
Let $\h$ be a Hilbert $C^*$-module and $C \in GL(\h)$. Assume $\Phi
= \{\phi_j \in \h : j \in J\}$ and $\Psi = \{\psi_j \in \h : j \in
J\}$ are $C$-controlled Bessel sequence for $\h$. Then the operator
\[M_{m, \Phi, \Psi}: \h\rightarrow \h\]
defined by
\[M_{m, \Phi, \Psi}f=\sum_{j\in J}m_j \langle f, \psi_j\rangle C\phi_j \]
is a well defined bounded operator.
\end{prop}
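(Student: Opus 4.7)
The plan is in three moves: reduce the $C$-controlled Bessel hypothesis on $\Phi$ and $\Psi$ to the classical Bessel condition, bound partial sums by an $A$-valued Cauchy--Schwarz step, and then close with a Cauchy argument.

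First, by Lemma \ref{controlled Bessel} the operator $S^{\Psi}_{C}f=\sum_j\langle f,\psi_j\rangle C\psi_j$ is well defined, and its finite partial sums are bounded and converge pointwise, so the uniform boundedness principle makes $S^{\Psi}_{C}$ bounded. Since $C\in GL(\h)$, the classical frame operator $S^{\Psi}f:=C^{-1}S^{\Psi}_{C}f=\sum_j\langle f,\psi_j\rangle\psi_j$ is also bounded, and a calculation mirroring Proposition \ref{frame operator} shows it is positive and self-adjoint. Hence $\langle S^{\Psi}f,f\rangle\leq\|S^{\Psi}\|\langle f,f\rangle$ in $A$, so $\Psi$ is a classical Bessel sequence with bound $B_{\Psi}:=\|S^{\Psi}\|$. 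The identical argument yields $\Phi$ Bessel with bound $B_{\Phi}$. In particular $\sum_j\langle f,\psi_j\rangle\langle\psi_j,f\rangle$ converges in norm in $A$ for every $f\in\h$.

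Next, for $f\in\h$ and a finite $F\subseteq J$, write $M^{F}f:=\sum_{j\in F}m_j\langle f,\psi_j\rangle C\phi_j$. For $g\in\h$ with $\|g\|\leq 1$, I would apply the $A$-valued Cauchy--Schwarz inequality in $A^{F}$ (with inner product $\langle\{a_j\},\{b_j\}\rangle=\sum a_j^{*}b_j$), with the choice $a_j=(m_j\langle f,\psi_j\rangle)^{*}$, $b_j=\langle C\phi_j,g\rangle$, to obtain
\[
\|\langle M^{F}f,g\rangle\|_{A}^{2}\leq\Bigl\|\sum_{j\in F}m_jm_j^{*}\langle f,\psi_j\rangle\langle\psi_j,f\rangle\Bigr\|\cdot\Bigl\|\sum_{j\in F}\langle g,C\phi_j\rangle\langle C\phi_j,g\rangle\Bigr\|,
\]
where $m_j\in Z(A)$ is used to identify $(m_j\langle f,\psi_j\rangle)(m_j\langle f,\psi_j\rangle)^{*}$ with $m_jm_j^{*}\langle f,\psi_j\rangle\langle\psi_j,f\rangle$. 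Centrality also gives $0\leq m_jm_j^{*}\leq\|m\|_{\infty}^{2}$ in $Z(A)$, which term-wise majorises the first factor by $\|m\|_{\infty}^{2}\langle f,\psi_j\rangle\langle\psi_j,f\rangle$; the $\Psi$-Bessel bound then controls its norm by $\|m\|_{\infty}^{2}B_{\Psi}\|f\|^{2}$. Rewriting the second factor as $\sum\langle C^{*}g,\phi_j\rangle\langle\phi_j,C^{*}g\rangle$ and applying the $\Phi$-Bessel bound controls it by $B_{\Phi}\|C\|^{2}\|g\|^{2}$. Combining and taking the supremum over $\|g\|\leq 1$ (using $\|h\|_{\h}=\sup_{\|g\|\leq 1}\|\langle h,g\rangle\|_{A}$) yields $\|M^{F}f\|_{\h}\leq\|m\|_{\infty}\sqrt{B_{\Psi}B_{\Phi}}\|C\|\|f\|$.

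The same estimate applied to a tail $F\setminus F_{0}$ shows the net of partial sums is Cauchy in $\h$, since norm convergence of $\sum_j\langle f,\psi_j\rangle\langle\psi_j,f\rangle$ in $A$ makes the $\Psi$-factor vanish uniformly on such tails. The limit defines $M_{m,\Phi,\Psi}f$, and passing to the limit in the estimate proves boundedness with $\|M_{m,\Phi,\Psi}\|\leq\|m\|_{\infty}\sqrt{B_{\Psi}B_{\Phi}}\|C\|$. The main technical point on which the argument leans is the centrality hypothesis $m_j\in Z(A)$: without it one cannot freely compare $m_jm_j^{*}\langle f,\psi_j\rangle\langle\psi_j,f\rangle$ with $\|m\|_{\infty}^{2}\langle f,\psi_j\rangle\langle\psi_j,f\rangle$ inside a positive sum, so relaxing that hypothesis would require a genuinely different majorisation.
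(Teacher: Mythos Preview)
Your argument is correct, and its core step---bounding $\|\langle M^{F}f,g\rangle\|$ via an $A$-valued Cauchy--Schwarz inequality and taking the supremum over $\|g\|\leq 1$---is exactly the paper's approach. You add two ingredients the paper leaves implicit: the preliminary reduction from the $C$-controlled Bessel hypothesis to an ordinary Bessel bound (through $S^{\Psi}=C^{-1}S^{\Psi}_{C}$ and positivity of $S^{\Psi}$), and an explicit Cauchy argument for norm convergence of the defining series. The paper's final estimate $\|m\|_{\infty}\sqrt{DD'}$ in effect uses classical Bessel bounds for $\Psi$ and $\{C\phi_j\}$ without deriving them from the controlled hypothesis, so your version is the more careful one; the price is an extra factor $\|C\|$ in your bound, which is harmless for the stated conclusion.
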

\begin{proof}
Let $\Phi = \{\phi_j \in \h : j \in J\}$ and $\Psi = \{\psi_j \in \h
: j \in J\}$ be $C$-controlled Bessel sequences for $\h$ with bounds
$D$ and $D'$, respectively. For any $f, g\in \h$  and finite subset
$I\subset J$,
\begin{align*}
   \left\|\sum_{i\in I} m_i\langle f, \psi_i\rangle C\phi_i \right\| & = sup_{g\in \h,
    \|g\|=1}\left \|\sum_{i\in I} m_i\langle f, \psi_i\rangle \langle C\phi_i, g\rangle
    \right\| \\ & \leq sup_{g\in \h,\|g\|=1} \left\|\left(\sum_{i\in I} |m_i|^2|\langle f, \psi_i\rangle|^2\right)^\frac{1}{2}
     \left(\sum_{i\in I} |\langle C\phi_i, g \rangle|^2\right)^\frac{1}{2}
    \right\| \\ & \leq sup_{g\in \h,\|g\|=1} \|m\|_\infty \left\|\left(\sum_{i\in I} |\langle f, \psi_i\rangle|^2\right)^\frac{1}{2}
     \left(\sum_{i\in I} |\langle C\phi_i, g \rangle|^2\right)^\frac{1}{2}
    \right\| \\ & \leq \|m\|_\infty \sqrt{DD'}\|f\|
\end{align*}
This show that $M_{m, \Phi, \Psi}$ is well defined and
\[\|M_{m, \Phi, \Psi}\|\leq \|m\|_{\infty}\sqrt{BB'}.\]
\end{proof}

Above lemma is a motivation to define the following definition.
\begin{defn}
Let $\h$ be a Hilbert $C^*$-module and $C \in GL(\h)$. Assume $\Psi
= \{\psi_j \in \h : j \in J\}$ and $\Phi = \{\phi_j \in \h : j \in
J\}$ are $C$-controlled Bessel sequences for $\h$.
 Then the operator
\[M_{m, \Phi, \Psi}: \h\rightarrow \h\]
defined by
\[M_{m, \Phi, \Psi}f=\sum_{j\in J}m_j \langle f, \psi_j\rangle C\phi_j\]
is called the $C$-controlled multiplier operator with symbol $m$.
\end{defn}

The following definition is a generalization of weighted frames in
Hilbert space to  Hilbert $C^*$-module.

\begin{defn}
Let $\Psi = \{\psi_j \in \h : j \in J\}$ be a sequence of elements
in Hilbert $C^*$-module $\h$ and $\{\omega_j\}_{j\in J} \subseteq Z(A)$
 a sequence of positive weights. This pair is called a $w$-frame of
Hilbert $C^*$-module $\h$ if there exist constants $C, D > 0$ and
such that
\[ C\langle f, f\rangle \leq \sum_{j\in J} \omega_j \langle f,
\psi_j \rangle \langle \psi_j, f \rangle \leq D\langle f,
f\rangle.\] for all $f\in \h$.
\end{defn}

A sequence$\{c_j: j\in J\}\in Z(A)$ is called semi-normalized if there are
bounds $b\geq a > 0$, such that $a\leq |c_n| \leq b$.

The following proposition gives a relation between controlled
frames, weighted frames and multiplier operators.

\begin{prop}\label{cwm}
Let $\h$ be a Hilbert $C^*$-module and $C \in GL(\h)$ be
self-adjoint and diagonal on $\Psi = \{\psi_j \in \h : j \in J\}$
and assume it generates a controlled frame. Then the sequence
$W=\{\omega_j\}_{j\in J} \subseteq Z(A) $, which verifies the relations
$C\psi_n = \omega_n\psi_n$, is semi-normalized and positive.
Furthermore $C = M_{W,\tilde{\Psi},\Psi}$.
\end{prop}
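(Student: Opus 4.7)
The plan is to extract everything from two facts: first, that by Proposition \ref{nessecery} the $C$-controlled frame $\Psi$ is itself a frame with canonical dual $\tilde{\Psi}$; second, that from the same proposition $C$ commutes with the frame operator $S$ and (by the preceding proposition characterizing self-adjoint-controlled frames) $C$ is positive and invertible in addition to being self-adjoint. The diagonality assumption gives elements $\omega_j\in Z(A)$ with $C\psi_j=\omega_j\psi_j$, and the centrality of $\omega_j$ will let us commute it through inner products and scalars at will.

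I would start with the multiplier identity, because it is the cleanest. Apply $C$ to the reconstruction formula $f=\sum_{j}\langle f,\tilde\psi_j\rangle\psi_j$ from Section 2; using the eigenvalue relation and $\omega_j\in Z(A)$ to commute it past the inner product gives
\[
Cf=\sum_{j\in J}\langle f,\tilde\psi_j\rangle\,C\psi_j=\sum_{j\in J}\omega_j\langle f,\tilde\psi_j\rangle\psi_j=M_{W,\tilde\Psi,\Psi}f,
\]
which is the last claim. Next, I would establish positivity of each $\omega_j$: since $C\geq 0$ and $C\psi_j=\omega_j\psi_j$, pairing with $\psi_j$ gives $\omega_j\langle\psi_j,\psi_j\rangle=\langle C\psi_j,\psi_j\rangle\geq 0$ in $A$. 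Restricting (without loss of generality) to those $j$ with $\psi_j\neq 0$ and exploiting that $\omega_j$ lies in the commutative subalgebra $Z(A)$, the inequality on a positive support forces $\omega_j\geq 0$ via functional calculus on the $C^*$-subalgebra generated by $\omega_j$ and $\langle\psi_j,\psi_j\rangle$.

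For semi-normalization I would use Theorem \ref{Balaz2010} applied to $C$: positivity and invertibility give $aI\leq C\leq bI$ with $a=\|C^{-1}\|^{-1}$ and $b=\|C\|$. Testing against $\psi_j$ and using the eigenvalue relation turns this into $a\langle\psi_j,\psi_j\rangle\leq\omega_j\langle\psi_j,\psi_j\rangle\leq b\langle\psi_j,\psi_j\rangle$ in $A$. Combined with the norm identity $\|\omega_j\psi_j\|^2=\||\omega_j|^2\langle\psi_j,\psi_j\rangle\|$ and its analogue for $C^{-1}\psi_j=\omega_j^{-1}\psi_j$, one obtains $\|\omega_j\|\leq b$ and $\|\omega_j^{-1}\|\leq a^{-1}$, hence the required bounds $a\leq|\omega_j|\leq b$ on the central elements $\omega_j$.

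The main obstacle is the semi-normalization step. In Hilbert space the weights are scalars, so $aI\leq C\leq bI$ trivially forces $a\leq\omega_j\leq b$; in a Hilbert $C^*$-module the operator inequality $a\langle\psi_j,\psi_j\rangle\leq\omega_j\langle\psi_j,\psi_j\rangle\leq b\langle\psi_j,\psi_j\rangle$ does not automatically transfer to a norm or order inequality on $\omega_j$ alone, because $\langle\psi_j,\psi_j\rangle$ may be a non-invertible positive element of $A$. The technical issue is to show that centrality of $\omega_j$ together with the Bessel bound on $\Psi$ (which rules out pathological shrinkage of $\langle\psi_j,\psi_j\rangle$ in the directions relevant to $\omega_j$) lets one restrict to the hereditary $C^*$-subalgebra generated by $\langle\psi_j,\psi_j\rangle$ and extract a genuine order bound on $\omega_j$; once this passage from the eigen-equation to an $A$-valued inequality on $\omega_j$ itself is done, everything else is a direct computation.
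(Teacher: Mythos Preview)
Your approach coincides with the paper's for the multiplier identity: both apply $C$ to the reconstruction formula $f=\sum_j\langle f,\tilde\psi_j\rangle\psi_j$ and use $C\psi_j=\omega_j\psi_j$. For the semi-normalization, the paper takes a slightly shorter route than you do: instead of applying Theorem~\ref{Balaz2010} to $C$, it applies it to $C^{1/2}$ to obtain $m\|f\|^2\leq\|C^{1/2}f\|^2\leq M\|f\|^2$, observes that $C^{1/2}\psi_j=\sqrt{\omega_j}\,\psi_j$, and then simply asserts that substituting $f=\psi_j$ yields $0<m\leq\omega_j\leq M$.

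The obstacle you flag is genuine and the paper does not address it. The paper's one-line passage from $m\|\psi_j\|^2\leq\|\sqrt{\omega_j}\,\psi_j\|^2\leq M\|\psi_j\|^2$ to $m\leq\omega_j\leq M$ is valid when $\omega_j$ is a scalar (the Hilbert-space setting of \cite{BAG}), but for $\omega_j\in Z(A)$ it faces exactly the difficulty you describe: the norm inequality only says $m\|\langle\psi_j,\psi_j\rangle\|\leq\|\omega_j\langle\psi_j,\psi_j\rangle\|\leq M\|\langle\psi_j,\psi_j\rangle\|$, and $\langle\psi_j,\psi_j\rangle$ need not be invertible in $A$. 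So your more cautious analysis is warranted; the paper's proof is effectively a transcription of the Hilbert-space argument and leaves this point unexamined. Your proposal does not fully close the gap either, but you have correctly located where the real work lies.
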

\begin{proof}
By Theorem \ref{Balaz2010}, we get the following result for
$C^{1/2}$: \[m \|f\|^2\leq \|C^{1/2}f\|^2\leq M \|f\|^2.\] As
$C\psi_j = \omega_j \psi_j$, clearly $C^{1/2}\psi_j =
\sqrt{\omega_j} \psi_j$. Applying the above inequalities  to the
elements of the sequence, we get $0< m\leq \omega_j \leq M$.

Clearly, for any $f\in \h$
\begin{align*}
Cf & = C\left( \sum_{j\in J} \langle f, \tilde{\psi}_j\rangle \psi_j
\right)= \sum_{j\in J} \langle f, \tilde{\psi}_j\rangle C\psi_j \\ &
=\sum_{j\in J} \langle f, \tilde{\psi}_j\rangle \omega_j\psi_j
=M_{W,\tilde{\Psi}, \Psi}f.
\end{align*}

\end{proof}

As a to the first part of Proposition \ref{cwm} a frame weighted by
semi-normalized sequence is always a frame. Indeed, we have the
following lemma.

\begin{lem}\label{semi}
Let $\{\omega_j: j\in J\}$ be a semi-normalized sequence with bounds
$a$ and $b$. If $\{\psi_j: j\in J\}$ is a frame with bounds $C$ and
$D$ in Hilbert $C*$-module $\h$, then $\{\omega_j\psi_j: j\in J \}$
is also a frame with bounds $a^2C$ and $b^2D$. The sequence
$\{\omega^{-1}_j \tilde{\psi}_j: j\in J\}$ is a dual frame of
$\{\omega_j\psi_j: j\in J\}$.
\end{lem}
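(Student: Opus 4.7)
The plan is to handle the two claims separately, relying on the hypothesis that each weight $\omega_j$ lies in the centre $Z(A)$ and is positive.

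First I would tackle the frame inequality. Using $A$-sesquilinearity of the inner product together with $\omega_j^* = \omega_j$, the general summand rewrites as
\[
\langle f, \omega_j\psi_j\rangle \langle \omega_j\psi_j, f\rangle = \langle f,\psi_j\rangle\, \omega_j \omega_j \,\langle \psi_j, f\rangle.
\]
Since $\omega_j \in Z(A)$, the positive scalar $\omega_j^2$ commutes with $\langle f,\psi_j\rangle$, and can therefore be pulled out to give
\[
\sum_{j\in J}\langle f, \omega_j\psi_j\rangle\langle \omega_j\psi_j, f\rangle \;=\; \sum_{j\in J}\omega_j^2\,\langle f,\psi_j\rangle\langle \psi_j, f\rangle.
\]
The semi-normalization $a\leq |\omega_j|\leq b$, combined with positivity of $\omega_j$, yields the operator inequality $a^2 \leq \omega_j^2 \leq b^2$ inside the commutative $C^*$-subalgebra generated by $\omega_j$ (via functional calculus applied to $|\omega_j|=\omega_j$). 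Combined with the frame inequality for $\{\psi_j\}$ this would give
\[
a^2 C\, \langle f,f\rangle \;\leq\; \sum_{j\in J}\omega_j^2 \langle f,\psi_j\rangle\langle\psi_j,f\rangle \;\leq\; b^2 D\, \langle f,f\rangle,
\]
which are the claimed bounds $a^2 C$ and $b^2 D$.

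For the dual claim, I would first observe that $\{\omega_j^{-1}\}_{j\in J}$ also lies in $Z(A)$, is positive, and is semi-normalized with bounds $b^{-1}$ and $a^{-1}$. Since $\{\tilde\psi_j\}$ is the canonical dual frame of $\{\psi_j\}$ and therefore itself a frame, the first part of the lemma shows that $\{\omega_j^{-1}\tilde\psi_j\}$ is a frame. For the reconstruction identity, a direct calculation using centrality and the canonical reconstruction formula yields
\[
\sum_{j\in J}\langle f, \omega_j^{-1}\tilde\psi_j\rangle\, \omega_j\psi_j \;=\; \sum_{j\in J}\langle f,\tilde\psi_j\rangle\, \omega_j^{-1}\omega_j\,\psi_j \;=\; \sum_{j\in J}\langle f,\tilde\psi_j\rangle\, \psi_j \;=\; f,
\]
so $\{\omega_j^{-1}\tilde\psi_j\}$ is indeed a dual of $\{\omega_j\psi_j\}$.

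The main obstacle is the correct bookkeeping in the $C^*$-valued setting: specifically, upgrading the scalar-looking bound $a\leq |\omega_j|\leq b$ to an \emph{operator} inequality in $A$, and justifying that $\omega_j^2$ may be pulled through $\langle f,\psi_j\rangle$. Both reduce to the fact that $\omega_j$ sits in the commutative $C^*$-subalgebra $Z(A)$ and is positive there; once this is in hand, the rest is routine manipulation of the $A$-valued inner product, mirroring the classical Hilbert-space argument.
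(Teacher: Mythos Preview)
Your proposal is correct and follows essentially the same route as the paper: pull the central weight through the $A$-valued inner product to reduce to $\omega_j^2\langle f,\psi_j\rangle\langle\psi_j,f\rangle$, bound $\omega_j^2$ between $a^2$ and $b^2$, and then verify the reconstruction identity for $\{\omega_j^{-1}\tilde\psi_j\}$. Two small differences worth noting: you work directly with the $A$-valued frame inequality $\langle f,f\rangle$, whereas the paper phrases the estimates in norm (implicitly invoking Theorem~\ref{equivalent frame}); and for the dual claim you apply the first part of the lemma to $\{\omega_j^{-1}\}$ and $\{\tilde\psi_j\}$ to see the weighted dual is itself a frame, while the paper instead argues that it is a Bessel sequence dual to a frame and hence a frame. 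Your version is arguably cleaner and more self-contained in the $C^*$-module setting, but the underlying idea is the same.
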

\begin{proof}
Since for any $f\in \h$ $|\langle
f,\omega_j\psi_j\rangle|^2=|\omega_j|^2|\langle f,\psi_j\rangle|^2$,
we get
\[\Delta:=\sum_{j\in J} |\langle f,\omega_j\psi_j\rangle|^2=\sum_{j\in J} |\omega_j|^2|\langle
f,\psi_j\rangle|^2.\] Thus $\Delta\leq b^2\sum_{j\in J} |\langle
f,\omega_j\psi_j\rangle|^2\leq b^2D\|f\|^2$. In addition,
\[\Delta\geq a^2 \sum_{j\in J} \langle
f,\psi_j\rangle|^2 \geq a^2C\|f\|^2. \] As $\sum_{j\in J} \langle f,
\omega_j\psi_j\rangle \omega_j^{-1}\tilde{\psi_j}=\sum_{j\in
J}\langle f.\psi_j\rangle \tilde{\psi_j}=f$, these two sequences are
dual. Since $\omega_j^{-1}$ is bounded,
$\{\omega_j^{-1}\tilde{\psi_j}: j\in J \}$ is a Bessel sequence dual
to a frame. Therefore, it is a dual frame of $\{\omega_j\psi_j: j\in
J\}$.
\end{proof}

The following results give a connection between weighted frames and
frame multipliers.

\begin{lem}
  Let $\Psi = \{\psi_j \in \h : j \in J\}$ be a frame for Hilbert $C^*$-module $\h$. Let $m = \{m_j\}_{j\in J}$ be a positive and semi-normalized
sequence. Then the multiplier $M_{m,\Psi}$ is the frame operator of
the frame $\{\sqrt{m_j}\psi_j: j\in J\} $ and therefore it is
positive, self-adjoint and invertible. If $\{m_j\}_{j\in J}$ is
negative and semi-normalized, then $M_{m,\Psi}$ is negative,
self-adjoint and invertible.
\end{lem}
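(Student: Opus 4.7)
The plan is to recognize the multiplier $M_{m,\Psi}$ (by which I mean $M_{m,\Psi,\Psi}$) as the frame operator of the reweighted frame $\{\sqrt{m_j}\,\psi_j\}_{j\in J}$, and then to read off the three properties from standard frame-operator facts.

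First I would set $\eta_j := \sqrt{m_j}\,\psi_j$. Since each $m_j$ is a positive element of $Z(A)$, the preliminaries yield $\sqrt{m_j}\in Z(A)$; and if $a\le m_j\le b$ then $\sqrt{a}\le\sqrt{m_j}\le\sqrt{b}$, so $\{\sqrt{m_j}\}$ is again semi-normalized. Lemma \ref{semi} then shows that $\{\eta_j\}_{j\in J}$ is a frame for $\h$, and therefore by the discussion in Section~2 (see also Proposition \ref{frame operator} with $C=I$) its frame operator $S_{\eta}f=\sum_j\langle f,\eta_j\rangle\eta_j$ is positive, self-adjoint, adjointable, and invertible.

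The heart of the argument is to verify that $M_{m,\Psi}f = S_{\eta}f$ for every $f\in\h$. Using property (3) of the inner product, $A$-linearity in the first slot (property (2)), and $\sqrt{m_j}^{*}=\sqrt{m_j}$, one gets
\[
\langle f,\eta_j\rangle \;=\; \langle \sqrt{m_j}\psi_j,f\rangle^{*} \;=\; \bigl(\sqrt{m_j}\,\langle\psi_j,f\rangle\bigr)^{*} \;=\; \langle f,\psi_j\rangle\,\sqrt{m_j}.
\]
Then, by associativity of the left $A$-action on $\h$ and centrality of $\sqrt{m_j}$,
\[
\langle f,\eta_j\rangle\,\eta_j \;=\; \bigl(\langle f,\psi_j\rangle\sqrt{m_j}\bigr)\bigl(\sqrt{m_j}\,\psi_j\bigr) \;=\; \langle f,\psi_j\rangle\,m_j\,\psi_j \;=\; m_j\,\langle f,\psi_j\rangle\,\psi_j.
\]
Summing over $j\in J$ gives $S_{\eta}f = \sum_j m_j\langle f,\psi_j\rangle\psi_j = M_{m,\Psi}f$, so $M_{m,\Psi}$ inherits positivity, self-adjointness, and invertibility from $S_\eta$.

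For the negative case, if $\{m_j\}$ is negative and semi-normalized then $\{-m_j\}$ is positive and semi-normalized, so by what has just been proved $M_{-m,\Psi}$ is positive, self-adjoint, and invertible. Since the multiplier depends $A$-linearly on its symbol, $M_{m,\Psi} = -M_{-m,\Psi}$, whence $M_{m,\Psi}$ is negative, self-adjoint, and invertible. The one place where hypotheses really bite is the computation above: the equality $\langle f,\psi_j\rangle\sqrt{m_j}\cdot\sqrt{m_j}\psi_j = m_j\langle f,\psi_j\rangle\psi_j$ uses centrality of $\sqrt{m_j}$. Without $m_j\in Z(A)$, one could not commute $\sqrt{m_j}$ past $\langle f,\psi_j\rangle$, and the identification with a classical frame operator would collapse; everything else is a routine combination of Lemma \ref{semi} with the standard frame-operator facts.
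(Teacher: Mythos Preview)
Your proof is correct and follows essentially the same route as the paper: identify $M_{m,\Psi}$ with the frame operator of $\{\sqrt{m_j}\psi_j\}$ via Lemma \ref{semi}, and handle the negative case by pulling out a sign. Your version is in fact more careful than the paper's, since you spell out precisely where centrality of $\sqrt{m_j}$ is used in the identity $\langle f,\sqrt{m_j}\psi_j\rangle\sqrt{m_j}\psi_j = m_j\langle f,\psi_j\rangle\psi_j$, which the paper simply asserts.
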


\begin{proof}
\[M_{m,\Psi}f=\sum_{j\in J} m_j\langle f, \psi_j\rangle \psi_j=\sum_{j\in J}\langle f, \sqrt{m_j}\psi_j\rangle\sqrt{m_j}\psi_j.\]
By Lemma \ref{semi}, $\{\sqrt{m_j}\psi_j: j\in J\}$ is a frame.
Therefore $M_{m,\Psi}=S_{(\sqrt{m_j}\psi_j)}$  is positive and
invertible.

Let $m_j<0$ for all $j\in J$, then $m_j=-\sqrt{|m_j|^2}$. Therefore
\[M_{m, \Psi}=-\sum_{j\in J}\langle f, \sqrt{m_j}\psi_j\rangle\sqrt{m_j}\psi_j=-S_{(\sqrt{m_j}\psi_j)}.\]
\end{proof}
\begin{thm}
  Let $\{\psi_j: j\in J\}$ be a sequence of elements in Hilbert $C^*$-module $\h$. Let $W=\{\omega_j : j\in J\} $ be a
sequence of positive and semi-normalized weights. Then the following
properties are equivalent:
\begin{enumerate}
  \item $\{\psi_j: j\in J\}$ is a frame;
  \item $M_{w,\Psi}$ is a positive and invertible operator;
  \item There are constants $C, D > 0$  such that for all $f\in \h$
\[ C\langle f, f\rangle \leq \sum_{j\in J} \omega_j \langle f,
\psi_j \rangle \langle \psi_j, f \rangle \leq D\langle f,
f\rangle.\] i.e. the pair $\{\omega_j: j\in J\}, \{\psi_j: j\in J\}$
forms a  weighted frame;
  \item $\{\sqrt{\omega_j}\psi_j: j\in J\}$ is a frame;
  \item $M_{w,\Psi}$ is a positive and invertible operator for any positive, semi-normalized
sequence $W' = \{\omega^{'}_j\}_{j\in J}$;
  \item  $(\omega_j\psi_j)$ is a frame, i.e. the pair $\{\omega_j: j\in J\}, \{\psi_j: j\in J\}$ forms a weighted frame.
\end{enumerate}

\end{thm}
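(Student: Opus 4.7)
The plan is to prove the six equivalences by establishing the cycle $(1)\Rightarrow(4)\Rightarrow(2)\Rightarrow(3)\Rightarrow(1)$ and then attach $(6)$ and $(5)$ to this cycle, using the two previously proved lemmas about semi-normalized weights and weighted multipliers as the engine. Throughout I use the fact recorded in the preliminaries that for $\omega_j\in Z(A)$ positive, $\sqrt{\omega_j}\in Z(A)$ exists, so $\{\sqrt{\omega_j}\psi_j\}$ is a well-defined sequence in $\h$ and all computations of the form $\omega_j\langle f,\psi_j\rangle\langle\psi_j,f\rangle=\langle f,\sqrt{\omega_j}\psi_j\rangle\langle\sqrt{\omega_j}\psi_j,f\rangle$ make sense.

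First I would prove $(1)\Rightarrow(4)$: since $W$ is semi-normalized with bounds $0<a\le \omega_j\le b$, the sequence $\{\sqrt{\omega_j}\}$ is semi-normalized with bounds $\sqrt{a}$ and $\sqrt{b}$, so Lemma~\ref{semi} applies directly to show $\{\sqrt{\omega_j}\psi_j\}$ is a frame. Then $(4)\Rightarrow(2)$ is exactly the content of the previous lemma: $M_{w,\Psi}f=\sum\langle f,\sqrt{\omega_j}\psi_j\rangle\sqrt{\omega_j}\psi_j$ is the frame operator of $\{\sqrt{\omega_j}\psi_j\}$ and hence positive, self-adjoint and invertible. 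For $(2)\Rightarrow(3)$ I use Theorem~\ref{Balaz2010} on $M_{w,\Psi}$ to obtain scalars $C,D>0$ with $C\,Id_\h\le M_{w,\Psi}\le D\,Id_\h$; pairing with $f$ on both sides and recognizing $\langle M_{w,\Psi}f,f\rangle=\sum_{j\in J}\omega_j\langle f,\psi_j\rangle\langle\psi_j,f\rangle$ yields the weighted frame inequality.

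To close the cycle with $(3)\Rightarrow(1)$, I exploit the semi-normalization bounds: for every $f\in\h$,
\[
a^{2}\sum_{j\in J}\langle f,\psi_j\rangle\langle\psi_j,f\rangle\le\sum_{j\in J}\omega_j\langle f,\psi_j\rangle\langle\psi_j,f\rangle\le b^{2}\sum_{j\in J}\langle f,\psi_j\rangle\langle\psi_j,f\rangle,
\]
which together with the weighted frame bounds $C$ and $D$ produces honest frame bounds $C/b^{2}$ and $D/a^{2}$ for $\{\psi_j\}$. This step is where one must be a little careful: one needs that multiplication by the positive central element $\omega_j$ preserves the $A$-valued order, but this is standard for positive elements of $Z(A)$.

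Finally I would attach the two remaining conditions. The equivalence $(1)\Leftrightarrow(6)$ is immediate from Lemma~\ref{semi} applied once with weights $\{\omega_j\}$ (giving $(1)\Rightarrow(6)$) and once with the semi-normalized weights $\{\omega_j^{-1}\}$ (giving $(6)\Rightarrow(1)$, since $\omega_j^{-1}\omega_j\psi_j=\psi_j$). For $(1)\Leftrightarrow(5)$, the implication $(1)\Rightarrow(5)$ is exactly the previous lemma applied to an arbitrary positive semi-normalized $W'$, while $(5)\Rightarrow(2)$ is trivial (specialize $W'=W$), and $(2)$ has already been linked to $(1)$ by the cycle. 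The main obstacle I anticipate is purely bookkeeping: verifying that the order-theoretic manipulations involving $\omega_j\in Z(A)$ remain valid in the $C^*$-algebra-valued setting, which is why centrality of the weights is essential in the definition.
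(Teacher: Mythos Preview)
Your proof plan is correct and is precisely the argument the paper has in mind: the paper's own proof consists of the single line ``It is similar to Theorem~4.5 of \cite{BAG},'' and your cycle $(1)\Rightarrow(4)\Rightarrow(2)\Rightarrow(3)\Rightarrow(1)$ together with the attachments of $(5)$ and $(6)$ via Lemma~\ref{semi} and the multiplier lemma is exactly how that reference proceeds, transported to the $C^*$-module setting through the lemmas the paper has just established. One cosmetic slip: in your $(3)\Rightarrow(1)$ step the semi-normalization bounds on $\omega_j$ are $a$ and $b$, not $a^2$ and $b^2$, so the resulting frame bounds for $\{\psi_j\}$ should read $C/b$ and $D/a$.
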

\begin{proof}
It is similar to Theorem 4.5 of \cite{BAG}.
\end{proof}

\textbf{Acknowledgement:} The authors would like to thank the refree for useful and helpful comments and suggestions.


\end{document}